\newtheorem{Thm}{Theorem}[section]
\theoremstyle{definition}
\newtheorem{Theorem}[Thm]{Theorem}
\newtheorem{Lemma}[Thm]{Lemma}
\newtheorem{Corollary}[Thm]{Corollary}
\newtheorem{Proposition}[Thm]{Proposition}
\newtheorem{Example}[Thm]{Example}
\theoremstyle{remark}
\newtheorem{Remark}{Remark}
\font\ym=msbm10  
\newcommand{\Aut}{{\rm Aut}}
\newcommand{\R}{\text{\ym R}}
\newcommand{\T}{\text{\ym T}}
\newcommand{\C}{\text{\ym C}}
\newcommand{\sN}{\mathscr N}
\newcommand{\End}{\hbox{\rm End}}
\title[endofunction]{M\"obius transformation and the integral representation of endofunctions}
\begin{document}
\maketitle
\begin{center}
  Tomohiro Hayashi (Nagoya Institute of Technology)
\end{center}
\begin{center}
  and
\end{center}
\begin{center}
  Shigeru Yamagami (Nagoya University)
\end{center}

\subjclass{MSC2020: 46G99, 31A10}

\keywords{Pick function, M\"obius transformation, integral representation, operator monotone function}

\begin{abstract}
  We supplement the Herglotz-Nevanlinna integral representation of so-called Pick functions by adding the formula for
  M\"obius transforms and the positivity characterization near boundary supports. 
\end{abstract}

\section*{Introduction}
Given a simply connected region $G$ in $\C$,
we say that a holomorphic function $\varphi$ on $G$ is an endofunction if $\varphi(G) \subset G$.
Functions of this type have been long and deeply investigated by many researchers from various points
of view, which are referred to under the names such as
Carath\'eodory, Herglotz, Nevanlinna, Pick and Schur in literature.

When $G$ is the upper half plane $\C_+ = \{ z \in \C; \text{Im}\,z > 0\}$,
a remarkable fact is that, if an atomic endofunction $\phi_s$ of $\C_+$ is assigned to each
boundary point $s \in \overline{\R} = \R \sqcup \{\infty\}$
inside the Riemann sphere $\overline{\C} = \C \sqcup \{\infty\}$ by
\[
  \phi_s(z) = \frac{1+sz}{s-z}, 
\]
then any endofunction is expressed up to additive real constants
by the integral of $\phi_s$ with respect to a unique positive measure on the boundary $\overline{\R}$.
In view of the fact that $\phi_s(z)$ ($z \in \C \setminus \R$) is total
in the continuous function space $C(\overline{\R})$
as a family of functions of $s$, the representing measure $\lambda$ is uniquely determined by $\varphi$.
More explicitly, we have the Stieltjes inversion formula 
\[
  \lambda(dx) = \lim_{y \downarrow 0} \frac{\text{Im}\,\varphi(x+iy)}{\pi(1+x^2)}\,dx
\]
as a Radon measure on $\R$ and
\[
  \lambda(\{\infty\}) = \lim_{y \to \infty} \frac{\text{Im}\,\varphi(iy)}{y}. 
\]
From this formula, one sees that $\lambda(U) = 0$ for an open neighborhood $U$ of 
a boundary point $c \in \overline{\R}$ if and only if
$\varphi$ is continuously extended to $U$ in such a way that
$\lim_{y \downarrow 0} \varphi(x+iy) \in \R$ ($x \in U$).
Thus, if we define the boundary support $[\varphi]$ of a holomorphic function $\varphi$ on $\C_+$
to be the residual of such points $c$, 
$[\lambda] = [\varphi]$. 
(There are so many literature on the subject and we just nominate \cite{Do, RR, Si, Vl} as references.)

In this paper, as a continuation of \cite[Appendix]{Y}, we shall supplement properties of endofunctions
in connection with M\"obius transformation and boundary positivity:

Let $\varphi$ be an endofunction with the representing measure $\lambda$.
Consider an invertible $2\times 2$ matrix $M$ for which 
the restriction $\phi_M$ of the associated linear fractional function to $\C_+$ is an endofunction and call $M$ an endomatrix. 
The representing measure $\mu_M$ of the composite endofunction $\varphi\circ\phi_M$ is then
explicitly described in terms of $\lambda$ and $M$.
Furthermore, the measure $\mu_M$ is parametrized by boundary points of $\overline{\R}$ to have 
to a Markov operator $\Lambda_M$ on $C(\overline{\overline{\R}})$ in such a way that $\Lambda_{MN} = \Lambda_M \Lambda_N$
for endomatrices $M$ and $N$. 

As for boundary positivity, a holomorphic function different from a real constant
is well-known to be
an endofunction if $\text{Im}\,\varphi(z) \geq 0$ for $z \in \C_+$ near the boundary $\overline{\R}$.
We point out here that this boundary positivity follows
from positivity near the boundary support $[\varphi]$, which makes the positivity checking easier.

The authors are grateful to M.~Nagisa and M.~Uchiyama for fruitful discussions
and helpful comments on the present work.

\section{Background}
Let $D = \{ z \in \C; |z| < 1\}$ be the unit disk in the complex plane with the boundary denoted by $\T = \{ z \in \C; |z| = 1\}$.
Thanks to Herglotz and Riesz, a holomorphic function $\phi$ on $D$ satisfying $\text{Re}\, \phi(z) \geq 0$ ($z \in D$) is characterized by
the integral representation
\[ 
\phi(z) = i\text{Im}(\phi(0)) 
+ \frac{1}{2\pi}\int_\T \frac{\zeta + z}{\zeta - z}\, \mu(d\zeta), 
\] 
where $\mu$ is a  positive Radon measure on $\T$ and uniquely determined by $\phi$.

By the holomorphic change-of-variable
\[ 
  w = i\frac{1-z}{1+z}, 
\quad 
z = \frac{i-w}{i+w}, 
\]
the unit disk $D = \{ |z| < 1\}$ is biholomorphically mapped to
the upper half plane $\C_+$ and vice versa.
Letting $\varphi(w) = i\phi(z)$, we obtain a holomorphic function $\varphi$ on $\C_+$
with its range controlled by $\text{Im}\,\varphi(w) \geq 0$ ($w \in \C_+$), which implies 
$\varphi(w) \in \C_+$ ($w \in \C_+$) unless $\varphi$ is a real constant.

In terms of the obvious parametrization $s \in \R \sqcup\{\infty\}$ of the boundary $\overline{\R}$ of $\C_+$
in the Riemann sphere $\overline{\C} = \C \sqcup \{\infty\}$, the integral representation of $\phi$ is rephrased by
\[
  \varphi(w) = \text{Re}\,\varphi(i) + \int_{\overline{\R}} \frac{1+sw}{s-w}\, \lambda(ds), 
\]
where $\lambda$ is the measure transferred from $\mu$ under the homeomorphism 
\[
  s \longleftrightarrow e^{it} \quad
  \text{with $s$ and $t$ related by $s = \tan \frac{t}{2}$ ($-\pi < t \leq \pi)$}
\]
and called the \textbf{representing measure} of $\varphi$.
By separating the mass at the point $\infty$ from $\lambda$, the above representation takes the form 
\[
  \varphi(w) = \text{Re}\,\varphi(i) + \lambda(\{\infty\}) w + \int_\R \frac{1+sw}{s-w}\, \lambda(ds). 
\]
Notice here that $\text{Im}\,\varphi(i) = \lambda(\overline{\R}) = \lambda(\{\infty\}) + \lambda(\R)$.


\begin{Example}
  Let $0 \not= a \in \C$ and $b,c \in \C$. 
  \begin{enumerate}
  \item
    A linear fractional function 
  \[
  \varphi(z) = \frac{a}{z+c} + b
    \quad
   (z \in \C_+)
  \]
  is an endofunction 
  if and only if $\text{Im}\,b \geq 0$, $\text{Im}\,c \geq 0$ and
  \[
    |a| + \text{Re}\,a \leq 2 (\text{Im}\, b) (\text{Im}\, c).
  \]
\item
  An affine function $\varphi(z) = az+b$ ($z \in \C_+$) is an endofunction if and only if
  $a>0$ and $\text{Im}\,b \geq 0$.
\end{enumerate}
\end{Example}

\begin{proof}
  (i) As the pole $z = -c$ of $\varphi$ is outside of $\C_+$, we should have $\gamma = \text{Im}\,c \geq 0$.
  The behavior of $\varphi(z)$ near $z=\infty$ then necessitates $\beta = \text{Im}\,b \geq 0$.

  When $\gamma = 0$ (i.e., $c \in \R$), the behaviou of $\varphi(z)$ near $z = -c$ necessitates $a < 0$ 
  which in turn implies $\varphi \in \End(\C_+)$.
  
  For $\gamma > 0$, 
  $\{a/(z+c); z \in \C_+\}$ is an open disk of radius $|a|/2\gamma$ centered at $-ia/2\gamma$,
  whence $\varphi$ is an endofunction 
  if and only if $|a| + \text{Re}\,a \leq 2\beta\gamma$. The last condition is equivalent to $a <0$ even for $\gamma = 0$. 
  
  Remark that, if $\beta\gamma > 0$, 
  \[
    |a| + \text{Re}\,a \leq 2\beta\gamma \iff |a|^2 \leq (2\beta\gamma - \text{Re}\,a)^2, 
  \]
  which is further rephrased by the positive semidefiniteness of 
  \[
    \text{Im}\,\varphi(x- \text{Re}\,c) = \frac{\beta x^2 + x\text{Im}\,a - \gamma \text{Re}\,a + \beta\gamma^2}{x^2 + \gamma^2}
    \quad
    (x \in \R). 
  \]
  
  When $\not= a \in \R$ and $b,c \in \R$,
  the positivity $\text{Im}\,\varphi(z) > 0$ ($z \in \C_+$) is equivalent to $|a| + \text{Re}\, a \leq 0$, i.e., $a < 0$,
  and the expression 
  \[
    \varphi(z) = b + \frac{ac}{1+c^2} + \frac{-a}{1+c^2} \frac{1-cz}{-c - z}
  \]
 reveals that the boundary measure of $\varphi$ is given by an atomic measure $(-a)/(1+c^2) \delta(s + c)\, ds$ at $-c$.  

 (ii) An affine function $az+b$ satisfies $\text{Im}\,(az+b) \geq 0$ ($z \in \C_+$) if and only if $a\geq 0$ and $\text{Im}\,b \geq 0$.
  The boundary measure $\lambda$ of $\varphi$ is given by $\lambda(\{\infty\}) = a$ and
  $\displaystyle \lambda(dx) = \frac{\text{Im}\,b}{\pi(1+x^2)}\, dx$ on $\R$
  with the real constant term given by $\text{Re}\,b$.
\end{proof}

 A holomorphic function $f$ on
 $\C \setminus \R = \overline{\C} \setminus \overline{\R}$ is said to be \textbf{real}
 if $\overline{f(\overline{z})} = f(z)$ ($z \in \C \setminus \R$).
Any holomorphic function $f_+$ on $\C_+$ is extended to real one $f$ in a one-to-one fashion and
the \textbf{boundary support} of $f_+$ or $f$, denoted by $[f_+]$ or $[f]$, is defined to be the complement of
\[
\{ t \in \overline{\R}; \text{$f$ is holomorphically extended to a neighborhood of $t$ in $\overline{\C}$} \}
\]
in $\overline{\R}$. Notice that the boundary support is a closed subset of $\overline{\R}$. 

By Stieltjes inversion formula and Schwarz reflection principle,
the support of the representing measure $\lambda$ of $\varphi$ is exactly the boundary support of $\varphi$.

In view of the correspondence $\varphi(w) = i\phi(z)$ ($z \in D$) discussed at the beginning,
we introduce the boundary support $[f]$ of a holomorphic function
$f$ on $D$ through a purely imaginary extension specified by $\overline{f(\overline{z})} = - f(1/z)$ ($z \in \overline{\C} \setminus \T$):
The complement of $[f]$ in $\T$ is the set of $\zeta \in \T$ for which $f$ is holomorphically extended to a neighborhood of $\zeta$ in $\C$.

Recall that, thanks to the maximum principle for harmonic functions,
a holomorphic function $\varphi$ on $\C_+$ satisfies $\text{Im}\,\varphi(z) \geq 0$ ($z \in \C_+$) if and only if the inequality holds
on a neighborhood of $\overline{\R}$ in $\C_+$ (boundary positivity).

From the definition of boundary support, a holomorphic function $\varphi$ on $\C_+$ is holomorphically extended to
$\overline{\C} \setminus [\varphi]$ and takes real values on $\overline{\R} \setminus [\varphi]$ by the reality of extension.
Thus, if $[\varphi] \not= \overline{\R}$, $\varphi$ is real analytic on any open interval $I$ disjoint from $[\varphi]$.

For $\varphi$ fulfilling the condition $\text{Im}\,\varphi(z) \geq 0$ ($z \in \C_+$), 
the integral representation reveals that the restriction $\varphi_I$ to $I$ is an operator monotone function.
Conversely any operator monotone function $f$ on an open interval $I$ is analytically extended to
a holomorphic function $\varphi$ on $I \sqcup (\C \setminus \R)$ so that $\text{Im}\,\varphi(z) \geq 0$ ($z \in \C_+$) and
$I \cap [\varphi] = \emptyset$ by L\"owner's characterization.

\section{M\"obius Endofunctions}
Recall that an \textbf{endofunction} is a holomorphic function of $\C_+$ into itself.
We denote the set of endofunctions on $\C_+$ by $\End(\C_+)$,  
which forms a unital semigroup by composition and is closed under addition and
positive scalar multiplication.
Notice that $\C_+$ is included in $\End(\C_+)$ as constant functions
so that $\C_+ + \End(\C_+) \subset \End(\C_+)$.

An endofunction $\varphi$ is called an \textbf{autofunction}
if $\varphi$ maps $\C_+$ onto $\C_+$ bijectively. 
Let $\Aut(\C_+)$ be the set of autofunctions, which forms a group by composition. 

For an invertible matrix $A =
\begin{pmatrix}
  a & b\\
  c & d
\end{pmatrix}$ of complex entries, i.e., 
for $A \in \text{GL}(2,\C)$, the associated M\"obius transform
(also known as a linear fractional transform) of $z \in \overline{\C}$ is denoted by 
\[
  A.z = \frac{az+b}{cz + d}. 
\]
An invertible matrix $A$ is called an \textbf{endomatrix} if $A.z \in \C_+$ ($z \in \C_+$) and 
let $\text{GL}_+(2,\C)$ be the set of endomatrices, 
which is a subsemigroup of the matrix group $\text{GL}(2,\C)$.
When $A$ is a matrix of real entries,
the condition $A \in \text{GL}_+(2,\C)$ is simplified to $\det(A) > 0$ and
$\text{GL}_+(2,\R) \equiv \text{GL}_+(2,\C) \cap \text{GL}(2,\R)$ is a subgroup
(the connected component) of $\text{GL}(2,\R)$. 
This can be seen in the following strengthened form: 
Thanks to the Schwarz lemma, each $\varphi \in \Aut(\C_+)$ is of the form $\varphi(z) = A.z$ with $A \in \text{GL}_+(2,\R)$. 

Since proportional matrices give rise to the same transform,
we may restrict these invertible matrices to the subsemigroup $\text{SL}_+(2,\C) \equiv \text{GL}_+(2,\C)\cap\text{SL}(2,\C)$
to obtain linear fractional endofunctions.
In particular, $\Aut(\C_+)$ is the image of the double covering homomorphism $\text{SL}(2,\R) \to \Aut(\C_+)$ by M\"obius transformation,
which induces an obvious biaction of $\text{SL}(2,\R)$ on $\End(\C_+)$:
\[
  (A\varphi)(z) = A.\varphi(z) = \frac{a\varphi(z)+b}{c\varphi(z)+d},
  \quad
  (\varphi A)(z) = \varphi(A.z) = \varphi\left(\frac{az+b}{cz+d}\right).
\]

M\"obius transforms of upper triangular matrices in $\text{SL}(2,\R)$ consist of affine endofunctions $az+b$ ($a>0$, $b \in \R$)
and form a subgroup of $\Aut(\C_+)$, which is naturally isomorphic to the semidirect product $\R\rtimes \R_+$.
As an affine transformation group, $\R\rtimes\R_+$ acts on $\C_+$ freely and transitively. 
The left composition by affine endofunctions therefore gives rise to the left action of the affine transformation group
$\R\rtimes\R_+$ on $\End(\C_+)$ for which we can choose endofunctions satisfying $\varphi(i) = i$ as representatives of orbits.
Clearly the set $K$ of these representatives is a convex subset of $\End(\C_+)$ with its extremal points given by 
\[
  \phi_s(z) = 
  \begin{pmatrix}
    s & 1\\
    -1 & s
  \end{pmatrix}.z = \frac{1+sz}{s-z}
  \quad
  (s \in \overline{\R}), 
\]
which will be called an \textbf{atomic endofunction}. Here $\phi_\infty(z) = z$ by definition. 
Notice that an autofunction $\varphi \in \Aut(\C_+)$ is atomic if and only if $\varphi(i) = i$. 


We now go into how the integral representation of an endofunction is changed under the right action of
$\text{SL}(2,\R)$.

Consider an autofunction $A.z \in \Aut(\C_+)$ with $A = \begin{pmatrix} a & b\\ c & d\end{pmatrix} \in \text{GL}_+(2,\R)$,
which is also denoted by $\phi_A$. In view of 
\[
  A.i = \frac{ac + bd + i(ad-bc)}{c^2+d^2}, 
\]
one sees that
\[
  A.z = \frac{ac+bd}{c^2+d^2} + \frac{ad-bc}{c^2+d^2} \phi_s(z),
  \quad
  s = - \frac{d}{c} 
\]
for $c \not= 0$ and $A.z = (az+b)/d$ for $c=0$. 
By replacing $A$ with $\begin{pmatrix} s & 1\\ -1 & s\end{pmatrix} A$, we have the following expression for the autofunction
$\phi_sA$: 
\begin{align*}
  (\phi_sA)(z) &=  \left( \begin{pmatrix} s & 1\\ -1 & s\end{pmatrix} A\right).z
  = \frac{(as+c)z + bs + d}{(-a+cs)z -b + ds}\\
             &=\text{Re}\, (\phi_sA)(i) 
               + \frac{(ad-bc)(1+s^2)}{(-a+cs)^2 + (-b+ds)^2}\, \phi_t(z), 
\end{align*}
where
\[
  t = \frac{b-ds}{cs-a} =
  \begin{pmatrix}
    d & -b\\
    -c & a
  \end{pmatrix}. s
  \iff
  s = A.t
\]
and
\[
  \text{Re}\, (\phi_sA)(i) = \frac{(ac+bd)(s^2-1) + (c^2+d^2 - a^2 - b^2)s}{(-a+cs)^2 + (-b+ds)^2}. 
\]
For an endofunction $\varphi(z)$ of $z \in \C_+$ with the representing measure $\lambda$, 
\begin{align*}
  \varphi(A.z)
  &= \text{Re}\,\varphi(i) + \int_{\overline{\R}} \phi_s(A.z)\, \lambda(ds)\\
  &= \text{Re}\,\varphi(i) + \int_{\overline{\R}} \text{Re}\, (\phi_sA)(i)\, \lambda(ds)\\
    &\quad+ (ad-bc) \int_{\overline{\R}} \frac{1+s^2}{(-a+cs)^2 + (-b+ds)^2}\, \phi_t(z)\, \lambda(ds), 
\end{align*}
whence, by the identity
\[
(-a+cs)^2 + (-b+ds)^2 = (ad-bc)^2 \frac{1+t^2}{(ct+d)^2}, 
\]
we have 
\[
    \varphi(A.z)
    = \text{Re}\,\varphi(i) + \int_{\overline{\R}} \text{Re}\, (\phi_sA)(i)\, \lambda(ds)
    + \int_{\overline{\R}} \phi_t(z)\, \lambda^A(dt) 
\]
with the measure $\lambda^A$ on $\overline{\R} = \R \sqcup \{\infty\}$ defined by 
\[
  \lambda^A(dt) = \frac{1 + (A.t)^2}{1+t^2} \frac{(ct+d)^2}{ad-bc}\, \lambda(A.dt),
\]
where $\lambda(A.dt)$ denotes the transferred measure of $\lambda(ds)$ under the change of the variable $s = A.t$.

Notice that
\[
  \frac{ds}{dt} = \frac{ad-bc}{(ct+d)^2},
  \quad
  \frac{dt}{ds} = \frac{ad-bc}{(a-cs)^2}.
\]
Notice also that
 \[
    \varphi(A.i)
    = \text{Re}\,\varphi(i) + \int_{\overline{\R}} \text{Re}\, (\phi_sA)(i)\, \lambda(ds)
    + i \int_{\overline{\R}}\lambda^A(dt),  
\]
whence
\[
      \text{Re}\,\varphi(A.i)
    = \text{Re}\,\varphi(i) + \int_{\overline{\R}} \text{Re}\, (\phi_sA)(i)\, \lambda(ds). 
\]

Summarizing the discussions so far,

 \begin{Theorem}\label{auto}
   For $A \in \text{SL}(2,\R)$ and an endofunction $\varphi$ on $\C_+$ with its boundary measure $\lambda$ on $\overline{\R}$,  
   we have
  \[
  \varphi(A.z) = \text{Re}\,\varphi(A.i) + \int_{\overline{\R}} \frac{1+tz}{t-z}\, \lambda^A(dt)
  \quad
  (z \in \C_+).
\]
 \end{Theorem}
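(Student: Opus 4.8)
The plan is to read the formula directly off the integral representation by transporting the variable through the autofunction $A.z$. Starting from $\varphi(w) = \text{Re}\,\varphi(i) + \int_{\overline{\R}} \phi_s(w)\, \lambda(ds)$ and putting $w = A.z$, note that for each fixed $z \in \C_+$ the integrand $s \mapsto \phi_s(A.z)$ is continuous on the compact space $\overline{\R}$, taking the value $A.z$ at $s = \infty$; hence the identity $\varphi(A.z) = \text{Re}\,\varphi(i) + \int_{\overline{\R}} \phi_s(A.z)\, \lambda(ds)$ is just the representation of $\varphi$ evaluated at the point $A.z \in \C_+$ and requires no extra convergence argument.

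The key step is to substitute, pointwise in $s$ under the integral sign, the autofunction decomposition $\phi_s(A.z) = (\phi_s A)(z) = \text{Re}\,(\phi_s A)(i) + \frac{(ad-bc)(1+s^2)}{(-a+cs)^2 + (-b+ds)^2}\,\phi_t(z)$ with $t = A^{-1}.s$, established above from the generic splitting of an autofunction into a real constant plus a positive multiple of an atomic endofunction. Since $\lambda$ is finite and each of the two factors is continuous, hence bounded, on the compact $\overline{\R}$, the integral splits into a $z$-independent term $\text{Re}\,\varphi(i) + \int_{\overline{\R}} \text{Re}\,(\phi_s A)(i)\, \lambda(ds)$ and an atomic part $(ad-bc)\int_{\overline{\R}} \frac{1+s^2}{(-a+cs)^2+(-b+ds)^2}\,\phi_t(z)\, \lambda(ds)$.

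Next I would change the variable from $s$ to $t$ by $s = A.t$, which is an orientation-preserving homeomorphism of $\overline{\R}$ as $\det A > 0$, pushing $\lambda(ds)$ forward to the positive measure $\lambda(A.dt)$. Feeding in the Jacobian $ds/dt = (ad-bc)/(ct+d)^2$ together with the algebraic identity $(-a+cs)^2 + (-b+ds)^2 = (ad-bc)^2(1+t^2)/(ct+d)^2$ recasts the atomic part as $\int_{\overline{\R}} \phi_t(z)\, \lambda^A(dt)$ with $\lambda^A$ the stated measure. Checking that the density $\frac{1+(A.t)^2}{1+t^2}\frac{(ct+d)^2}{ad-bc}$ is positive, and that the apparent zero of $(ct+d)^2$ at the preimage $t = -d/c$ of $\infty$ is cancelled by the pole of $(A.t)^2$ and thus removable, confirms that $\lambda^A$ is a genuine finite positive Radon measure on $\overline{\R}$.

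Finally I would identify the $z$-independent term with $\text{Re}\,\varphi(A.i)$. Evaluating the assembled identity at $z = i$ and using $\phi_t(i) = i$ for every $t \in \overline{\R}$, the last integral contributes the purely imaginary quantity $i\,\lambda^A(\overline{\R})$; taking real parts therefore gives $\text{Re}\,\varphi(A.i) = \text{Re}\,\varphi(i) + \int_{\overline{\R}} \text{Re}\,(\phi_s A)(i)\, \lambda(ds)$, exactly the constant standing in front, and substituting it back yields the asserted formula (with $\lambda^A(\overline{\R}) = \text{Im}\,\varphi(A.i)$ as a byproduct). I expect the main obstacle to be the measure-theoretic bookkeeping of the change of variables on the compactified line $\overline{\R}$—correctly handling the mass at $\infty$ and the removable zero of $(ct+d)^2$ so that the positivity and finiteness of $\lambda^A$ are transparent—rather than any analytic subtlety, since the interchange of the decomposition and the integration is justified outright by continuity of $s \mapsto \phi_s(A.z)$ on the compact $\overline{\R}$.
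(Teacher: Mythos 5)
Your proposal is correct and follows essentially the same route as the paper: expand $\varphi(A.z)$ via the integral representation, substitute the decomposition $(\phi_sA)(z)=\text{Re}\,(\phi_sA)(i)+(\text{positive factor})\,\phi_t(z)$ with $s=A.t$, change variables using the Jacobian and the identity $(-a+cs)^2+(-b+ds)^2=(ad-bc)^2(1+t^2)/(ct+d)^2$, and identify the constant term by evaluating at $z=i$. The extra care you take with the mass at $\infty$ and the removable zero of $(ct+d)^2$ is a sound (if implicit in the paper) addition, not a deviation.
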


 \begin{Remark}
   For $f \in C(\overline{\R})$,
   \[
     \int_{\overline{\R}} f(t)\, \lambda^A(dt)
     = \int_{\overline{\R}} f(A^{-1}.s) \frac{1+s^2}{1+(A^{-1}.s)^2} \frac{ad-bc}{(a-cs)^2}\, \lambda(ds).
   \]
 \end{Remark}

 Matrices for the M\"obius transformation are now relaxed to $\text{GL}_+(2,\C)$.
 To this end, we need a closer look into the semigroup $\text{GL}_+(2,\C)$.
 Though $\text{GL}_+(2,\R)$ biacts on $\text{GL}_+(2,\C)$ as a subgroup, 
 the left and right actions are not symmetrical due to the non-group character of $\text{GL}_+(2,\C)$, 
 which can be witnessed by looking at the geometric position of the boundary circline (circle or line) 
 $M.\overline{\R}$ of $M \in \text{GL}_+(2,\C)$:
 A right translate of $M$ leaves $M.\overline{\R}$ unchanged, whereas the left action moves it around.

 For $M \in \text{GL}_+(2,\C)$ with $M.\overline{\R} \subset \overline{\R} \sqcup \C_+$ a circle of radius $r$ centered at $c \in \C_+$,
 the contact degree of $M$ is defined to be $\kappa(M) = r/\text{Im}\,c$. 
 For $M$ with $M.\overline{\R}$ parallel to $\overline{\R}$ as an extended line, we set $\kappa(M) = 1$.
 Note that $\kappa(M) = 1$ if and only if $M.\overline{\R}$ is a circline touching at one point in $\overline{\R}$.
 Thus $\kappa$ is a function defined on $\text{GL}_+(2,\C) \setminus \text{GL}_+(2,\R)$ taking values in $(0,1]$,
 which turns out to be a complete invariant for orbits of the biaction of $\text{GL}_+(2,\R)$
 on the difference set $\text{GL}_+(2,\C) \setminus \text{GL}_+(2,\R)$.

 In fact, from the definition, $\kappa$ is invariant under the right action of $\text{GL}_+(2,\R)$.
 The left invariance $\kappa(AM) = \kappa(M)$ is clear for an upper triangular $A \in \text{GL}_+(2,\R)$.
 By the inversion matrix
 $J = \begin{pmatrix}
 0 & 1\\ -1 & 0
 \end{pmatrix}$, the circle $|z-c| = r$ is M\"obius transformed to a circle of radius $r/(|c|^2 - r^2)$ centered at
 $-\overline{c}/(|c|^2-r^2)$, whence $\kappa(JM) = \kappa(M)$.
 Since $\text{GL}_+(2,\R)$ is generated by upper triangle matrices and $J$, $\kappa$ is invariant under the left action
 of $\text{GL}_+(2,\R)$ as well.
 
 To see the completeness of the invariant, let $M,M' \in \text{GL}_+(2,\C)$ satisfy $\kappa(M) = \kappa(M') \equiv \kappa$. 
 If $0 < \kappa < 1$, by left translates of $M$ and $M'$ by upper triangular matrices,
 we may assume that both boundary circles have purely imaginary centers $i\gamma$ and $i\gamma'$ with radii $r$ and $r'$ respectively.
 Then $\kappa = r/\gamma = r'/\gamma'$ gives rise to a common ratio $r'/r = \gamma'/\gamma \equiv \rho >0$ and
 $M'' = \begin{pmatrix} \rho & 0\\ 0 & 1\end{pmatrix} M$ satisfies
 \[
   M'.\overline{\R} = \rho (M.\overline{\R}) = M''.\overline{\R} \iff M.\C_+ = M''.\C_+. 
 \]
 Consequently, $M^{-1}M'' \in \text{GL}(2,\C)$ gives rise to an autofunction of $\C_+$ and hence belongs to $\text{GL}_+(2,\R)$. 

 There remains the case $\kappa = 1$.
 If $M.\overline{\R}$ is a circle touching $\R$ at a real point, we may assume that it touches at $0$
 after a real affine transformation $az + b$ ($a>0$, $b \in \R$). 
 If $M.\overline{\R}$ is an extended line parallel to $\overline{\R}$,
 $(JM).\overline{\R}$ is a circle touching $\R$ at $0$.
 Thus we may assume that both boundary circles are touching $\R$ at $0$ with their centers $i\gamma$ and $i\gamma'$ respectively.
 By choosing $\rho = \gamma'/\gamma$ this time, the argument in the case $0 < \kappa < 1$ works again to conclude that
 $M^{-1}M' \in \text{GL}_+(2,\R)$. 
 
 \begin{Remark}
   $\text{SL}_+(2,\C)$ is generated by $\text{SL}(2,\R)$ and a purely imaginary shift
   $
   \begin{pmatrix}
   1 & i\\ 0 & 1
   \end{pmatrix}
   $, whereas $\text{SL}(2,\R)$ is generated by dilations $\begin{pmatrix} a & 0\\ 0 & a^{-1}\end{pmatrix}$ ($0 \not= a \in \R$),
   shifts $\begin{pmatrix} 1 & b\\ 0 & 1 \end{pmatrix}$ ($b \in \R$) and the inversion $J$. 
 \end{Remark}

 Given an endomatrix $M \in \text{GL}_+(2,\C)$, the associated endofunction $M.z$ of $z \in \C_+$ is denoted by
 $\phi_M(z)$ with the representing measure of $\phi_M$ by $\mu_M$. Thus an atomic endofunction $\phi_s$ is of the form $\phi_M$ for
 \[
   M =
 \frac{1}{\sqrt{1+s^2}} 
 \begin{pmatrix}
   s & 1\\
   -1 & s
 \end{pmatrix}
 \doteq
 \begin{pmatrix}
   s & 1\\
   -1 & s
 \end{pmatrix}. 
\]
Here $\doteq$ indicates the proportionality of matrices. 
Notice that $M$ takes the form
$
 \begin{pmatrix}
   \sin\theta & \cos\theta\\
   -\cos\theta & \sin\theta
 \end{pmatrix}
$ in terms of $\theta = \arctan s$ ($-\pi/2 \leq \theta \leq \pi/2$). 
 
\begin{Lemma}\label{unbounded} 
  For an endofunction\footnote{We reset the usage of symbols $\phi$ and $\mu$ so that these indicate ones on $\C_+$ instead of $D$.}
  $\phi = \phi_M$ with $M \in \text{GL}_+(2,\C)$, 
  the following conditions are equivalent.
   \begin{enumerate}
   \item
   The representing measure $\mu$ of $\phi$ contains an atomic measure.
   \item
     The image $\phi(\C_+)$ is an unbounded subset of $\C_+$.
   \item
   The image $\phi(\C_+)$ is $\C_+ + ir$ with $r \geq 0$.
   \item
     $\phi = a \phi_s + b$ with $a > 0$, $b \in \R \sqcup \C_+$ and $s \in \overline{\R}$, where $s$ is characterized by
     $\phi(s) = \infty$. 
   \end{enumerate} 
 \end{Lemma}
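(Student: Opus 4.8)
The plan is to exploit that $\phi=\phi_M$ is a rational function of degree one, with a single pole $p=M^{-1}.\infty$ (the point $-d/c$, or $\infty$ when $c=0$). Since $\phi$ is holomorphic on $\C_+$, this pole cannot lie in $\C_+$, so $p\in\C_-\sqcup\overline{\R}$. I will show that each of the four conditions is equivalent to the single geometric statement $p\in\overline{\R}$, and the lemma then follows by transitivity. Concretely I aim to prove (ii)$\Rightarrow(p\in\overline{\R})\Rightarrow$(iv)$\Rightarrow$(iii)$\Rightarrow$(ii) together with (iv)$\Leftrightarrow$(i).

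For the first link, I use that the M\"obius transform is a homeomorphism of the Riemann sphere, so $\overline{\phi(\C_+)}=M.\overline{\C_+}$; hence $\phi(\C_+)$ is unbounded exactly when $\infty\in M.\overline{\C_+}$, i.e. when $p=M^{-1}.\infty\in\overline{\C_+}$, and since $p\notin\C_+$ this means $p\in\overline{\R}$. Assuming $p\in\overline{\R}$, I then read off the explicit form (iv). If $p=\infty$ then $c=0$ and $\phi$ is affine, so Example~(ii) gives $\phi(z)=az+b=a\phi_\infty(z)+b$ with $a>0$ and $\text{Im}\,b\ge 0$. If $p=s\in\R$, partial fractions give $\phi(z)=P/(s-z)+Q$; imposing $\text{Im}\,\phi\ge 0$ on $\C_+$ and letting the imaginary part of the argument tend to $0$ forces $P$ to be real, then $P>0$ and $\text{Im}\,Q\ge 0$. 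Using $\phi_s(z)=-s+(1+s^2)/(s-z)$ I rewrite this as $\phi=a\phi_s+b$ with $a=P/(1+s^2)>0$ and $b=Q+Ps/(1+s^2)\in\R\sqcup\C_+$, and $\phi(s)=\infty$ identifies $s$; this is (iv).

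The remaining implications of the cycle are short. For (iv)$\Rightarrow$(iii), since $\phi_s\in\Aut(\C_+)$ and $a>0$ we get $\phi(\C_+)=a\,\phi_s(\C_+)+b=\C_+ +b=\C_+ +ir$ with $r=\text{Im}\,b\ge 0$; and (iii)$\Rightarrow$(ii) is immediate because $\C_+ +ir$ is unbounded. This closes the loop, so (ii), (iii), (iv) are mutually equivalent.

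It remains to match (i) with (iv). In one direction $\phi=a\phi_s+b$ has, by linearity of the integral representation, the representing measure $a\delta_s$ plus the absolutely continuous part $\text{Im}(b)/(\pi(1+x^2))\,dx$ contributed by the constant $b$ (the atom sitting at $\infty$ when $s=\infty$), so $\mu$ contains the atom $a\delta_s$. Conversely, suppose $\mu$ has an atom of mass $m>0$ at some $s\in\overline{\R}$. The integral representation shows $m\phi_s$ to be the corresponding singular part, so $\phi-m\phi_s$ is a degree-one rational function with no pole anywhere on the Riemann sphere, hence a constant $b$; positivity of $\text{Im}\,\phi$ together with $\inf_{\C_+}\text{Im}\,\phi_s=0$ forces $\text{Im}\,b\ge 0$, so $\phi=m\phi_s+b$ is again of the form (iv). I expect this converse to be the main obstacle, since it is the only step that must manufacture analytic structure out of the purely measure-theoretic hypothesis; the decisive leverage is that a M\"obius function is rational of degree one, so a single atom necessarily accounts for its entire singular behaviour.
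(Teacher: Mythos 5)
Your proof is correct, and for the cycle (ii)$\Rightarrow$(iv)$\Rightarrow$(iii)$\Rightarrow$(ii) it coincides with the paper's argument: unboundedness of the image forces the pole $M^{-1}.\infty$ onto $\overline{\R}$, partial fractions plus boundary positivity pin down $a>0$ and $\text{Im}\,b\geq 0$, and the identity $\phi_s(z)=-s+(1+s^2)/(s-z)$ converts this into the normal form (iv); your computation of the representing measure of $a\phi_s+b$ for (iv)$\Rightarrow$(i) is likewise the paper's. The one place you genuinely diverge is how (i) is tied back in. The paper proves (i)$\Rightarrow$(ii) by contraposition: a bounded image means $\phi$ extends continuously to $\overline{\C_+}$, so by Corollary~\ref{mass} every atomic mass (including the one at $\infty$) vanishes. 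You instead prove (i)$\Rightarrow$(iv) directly: an atom of mass $m$ at $s$ forces, again via Corollary~\ref{mass}, the unique pole of the degree-one rational function $\phi$ to sit at $s$ with residue $-(1+s^2)m$, so the singular parts of $\phi$ and $m\phi_s$ cancel and $\phi-m\phi_s$ is a pole-free rational function, hence a constant with nonnegative imaginary part. Both arguments rest on the same asymptotic formula for point masses; the paper's contrapositive is slightly shorter and makes (ii) the hub of the equivalences, while yours is more constructive in that it recovers the explicit form (iv) from the measure-theoretic hypothesis and makes transparent why a M\"obius endofunction can carry at most one atom. Your phrase ``degree-one rational function with no pole anywhere'' should read ``rational function of degree at most one,'' and the claim that $m\phi_s$ is the full singular part deserves the explicit residue computation sketched above, but these are cosmetic.
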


 \begin{proof}
   (i) $\Longrightarrow$ (ii): Assume that $\phi(\C_+)$ is bounded.
   The endofunction $\phi$ is then continuously extended to $\overline{\C_+} = \C_+ \sqcup \overline{\R}$ as a linear fractional function.
   By Corollary~\ref{mass}, this means that $\mu$ contains no atomic mass.

   (ii) $\Longrightarrow$ (iv): Since $\phi(\C_+)$ is unbounded and the boundary $\phi(\overline{\R})$ is a circline in
   the Riemann sphere $\overline{\C}$ as a property of M\"obius transform, the path $\phi(t)$ ($t \in \overline{\R}$) passes through
   $\infty$ and we can find a point $s \in \overline{\R}$ satisfying $\phi(s) = \infty$. Thus we have an expression
   \[
     \phi(z) =
     \begin{cases}
       \frac{a}{s-z} + b &(s \in \R),\\
       az + b &(s = \infty)
     \end{cases}
   \]
   with $0 \not= a \in \C$ and $b \in \C$. In view of the positivity $\phi(\C_+) \subset \C_+$, the behavior of $\phi(z)$
   near $z=s$ or $z = \infty$ and then near $z = \infty$ or $z=0$ imply $a>0$ and $\text{Im}\,b \geq 0$.
   Consequently, if $s \in \R$, 
   \[
     \phi(z) = \frac{a}{1+s^2} \phi_s(z) + \frac{as}{1+s^2} + b
   \]
   is the desired form, whereas 
   $\phi(z) = a\phi_s(z) + b$ for $s = \infty$.

   (iv) $\Longrightarrow$ (iii): $\phi(\C_+) = a \phi_s(\C_+) + b = \C_+ + b = \C_+ + ir$ for $r = \text{Im}\, b$.

   (iii) $\Longrightarrow$ (ii) is obvious.

   (iv) $\Longrightarrow$ (i): For $b \in \R$, $a\phi_s + b$ is an autofunction with $\mu$ an atomic measure of mass $a$.
   For $b \in \C_+$, $\phi$ is a semilinear combination of endofunctions $\phi_s$ and $b$, whence again $\mu$ contains an atomic measure of
   mass $a$. 
 \end{proof}

 In what follows, we regard the representing measure $\mu_M$ of $\phi_M$ as a Radon measure on $\overline{\R}$
 and the associated linear functional on $C(\overline{\R})$ is also denoted by $\mu_M$, i.e., 
 for $f \in C(\overline{\R})$, 
 \[
   \mu_M(f) = \int_{\overline{\R}} f(s)\, \mu_M(ds).
 \]
 
 We claim that $\mu_M(f)$ is continuous in $M \in \text{GL}_+(2,\C)$.
 As a preliminary fact, observe that the total mass $\mu_M(\overline{\R}) = \text{Im}\,\phi_M(i)$ (the $L^1$ or the functional norm of $\mu_M$)
 as well as the real constant term $\text{Re}\,\phi_M(i)$ of $\phi_M$ are continuous functions
 of $M \in \text{GL}_+(2,\C)$. 

 For $M = \begin{pmatrix} a & b\\ c & d\end{pmatrix} \in \text{GL}_+(2,\C)$, $M.\C_+$ is bounded exactly when $c \not= 0$ and
 $d/c \not\in \R$. In that case, there exists a neighborhood $\sN$ of $M$ in $\text{GL}_+(2,\C)$ such that
 \begin{enumerate}
   \item
     $\phi_N(z)$ is continuously extended to $\overline{\R} \sqcup \C_+$ for each $N \in \sN$ and
   \item
     $\phi_N(x)$ is continuous as a function of  $(x,N) \in \overline{\R}\times \sN$.
 \end{enumerate}
 Then $\mu_N$ for $N \in \sN$ is supported by $\R$ and equivalent to the Lebesgue measure on $\R$ so that
 $\mu_N(dx) = \bigl(\text{Im}\, \phi_N(x)/\pi(1+x^2)\bigr)\, dx$, whence the continuity in (ii) implies that
 $\sN \ni N \mapsto \mu_N$ is continuous with respect to the $L^1$-norm on $C(\overline{\R})^*$.

 Next assume that $M.\C_+$ is unbounded, i.e., $c=0$ or $d/c \in \R$. We focus on the case $s \equiv -d/c \in \R$ with $c \not= 0$
 as the case $c = 0$ can be dealt with in a similar fashion. Then the expression
 \[
   \phi_M(z) = -\frac{as+b}{(1+s^2)c} \phi_s(z) + \frac{a-bs}{(1+s^2)c}
 \]
shows that $(as+b)/c < 0$ and $\text{Im} (a-bs)/c \geq 0$, which is compatible with the formula
 \[
   (1+s^2) \mu_M(\{s\}) = \lim_{y \to +0} y\,\text{Im}\, \phi_M(s+iy) = - \frac{as+b}{c}
 \]
 in Corollary~\ref{mass}.
 
 For any $f \in C(\overline{\R})$, we shall check the continuity of $\mu_{M'}(f)$ ($M' \in \text{GL}_+(2,\C)$) at $M' = M$, i.e., 
 given a matrix sequence $M_n = \begin{pmatrix} a_n & b_n\\ c_n & d_n \end{pmatrix}$ in $\text{GL}_+(2,\C)$ converging to $M$,
 $\lim_n \mu_{M_n}(f) = \mu_M(f)$.
 To simplify the notation, we drop off $M$ and write as $\phi$ and $\mu_n$ instead of $\phi_M$ and $\mu_{M_n}$. 

 Since $c_n \to c \not= 0$, we may suppose that $c_n \not= 0$ for all $n$. Notice that the measure $\mu_n$ is supported by $\R$ then. 
 For $n$ satisfying $d_n/c_n \in \R$, we write $s_n = -d_n/c_n$. In view of $d_n/c_n \to d/c$, the subsequence $(s_n)$ of $(-d_n/c_n)$
 converges to $s$ if there are infinitely many $s_n$'s.
 With these observations in mind, we shall go into estimates relevant to the convergence.

 Given $\epsilon>0$, choose $\delta>0$ large enough so that $|f(x) - f(s)| \leq \epsilon$ ($|x-s| \leq \delta$) and then
 choose $m$ large enough so that $|s_n-s| \leq \delta/2$ ($n \geq m$).
 Since $|\phi(x)|$ ($|x-s| \geq \delta$) is bounded and $\lim_{m \leq n \to \infty} \phi_n(x) = \phi(x)$ uniformly on $|x-s|\geq \delta$,
 we can find $m' \geq m$ such that
 \[
   |\phi_n(x) - \phi(x)| \leq \epsilon
   \quad
   (|x-s| \geq \delta, n \geq m'), 
 \]
and then 
 \begin{align*}
   &\left| \int_{|x-s| \geq \delta} f(x)\, \mu_n(dx) - \int_{|x-s| \geq \delta} f(x)\, \mu(dx) \right|\\
   &\qquad= \left| \int_{|x-s| \geq \delta} f(x) \frac{\text{Im}\, \phi_n(x) - \text{Im}\,\phi(x)}{\pi(1+x^2)}\, dx \right|\\
   &\qquad\leq \epsilon \| f\| \int_{|x-s| \geq \delta} \frac{1}{\pi(1+x^2)}\, dx \leq \epsilon \| f \|. 
 \end{align*}
 Notice here that the initial estimate on $f$ is automatically satisfied for the choice $f \equiv 1$ and the above inequality takes the form 
 \[
   \bigl|\mu_n(|x-s| \geq \delta) - \mu(|x-s| \geq \delta)\bigr| \leq \epsilon
   \quad (n \geq m'). 
 \]
 As for the estimate on the $\delta$-neighborhood of $s$, we have
  \begin{align*}
   &\left| \int_{|x-s| \leq \delta} f(x)\, \mu_n(dx) - \int_{|x-s| \leq \delta} f(x)\, \mu(dx) \right|\\
   &\qquad= \int_{|x-s| \leq \delta} |f(x) - f(s)|\, \mu_n(dx) + \int_{|x-s| \leq \delta} |f(x) - f(s)|\, \mu(dx)\\
   &\qquad\qquad+ |f(s)| \bigl|\mu_n(|x-s| \leq \delta) - \mu(|x-s| \leq \delta)\bigr|\\
  &\qquad\leq \epsilon (\mu_n(\R) + \mu(\R)) + \| f\|\, \bigl|\mu_n(|x-s| \leq \delta) - \mu(|x-s| \leq \delta)\bigr|. 
 \end{align*}
 In the last term of measure difference,
 \begin{align*}
   &\bigl| \mu_n(|x-s| \leq \delta) - \mu(|x-s| \leq \delta) \bigr|\\
   &\qquad= \bigl| \mu_n(\R) - \mu_n(|x-s| \geq \delta) - \mu(\R) + \mu(|x-s| \geq \delta) \bigr|\\
   &\qquad\leq |\mu_n(\R) - \mu(\R)| + \bigl| \mu(|x-s| \geq \delta) - \mu_n(|x-s| \geq \delta) \bigr| 
 \end{align*}
 is used to have
 \[
   \left| \int_{|x-s| \leq \delta} f(x)\, \mu_n(dx) - \int_{|x-s| \leq \delta} f(x)\, \mu(dx) \right|
   \leq |\mu_n(\R) - \mu(\R)| + \epsilon. 
 \]
 
 Putting all these together,
 \[
   | \mu_n(f) - \mu(f)| \leq 2\epsilon \| f\| + \epsilon (\mu_n(\R) + \mu(\R))
      \quad (n \geq m'). 
 \]
 Since $\mu_n(\R) = \text{Im}\, \phi_n(i) \to \text{Im}\,\phi(i) = \mu(\R)$, this means that $\lim_n \mu_n(f) = \mu(f)$.

 \begin{Theorem}\label{continuity}
   The Radon measure $\mu_M$ on $\overline{\R}$ is weak*-continuous in $M \in\text{GL}_+(2,\C)$ and,
   if we restrict $M$ to an open subset consisting of $M$ for which $M.\C_+$ is bounded, it is continuous with respect to
   the functional norm of $C(\overline{\R})^*$. 
 \end{Theorem}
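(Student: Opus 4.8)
The plan is to separate the two assertions and to assemble the analysis already carried out in the paragraphs preceding the statement, rather than to start afresh. For the functional-norm assertion, I would work on the open set $\{M \in \text{GL}_+(2,\C); M.\C_+ \text{ is bounded}\}$, that is $c \ne 0$ and $d/c \notin \R$, and invoke the neighborhood $\sN$ constructed there: on $\sN$ each $\phi_N$ extends continuously to $\overline{\R}\sqcup\C_+$, the representing measure is absolutely continuous with density $\text{Im}\,\phi_N(x)/\pi(1+x^2)$, and $\phi_N(x)$ is jointly continuous in $(x,N)\in\overline{\R}\times\sN$. Since $\overline{\R}$ is compact, joint continuity forces uniform convergence of the densities along any convergent sequence of matrices, and because $1/\pi(1+x^2)$ is integrable this yields convergence of the densities in $L^1(\R,dx)$. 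Hence $N\mapsto\mu_N$ is continuous for the total-variation (functional) norm on $C(\overline{\R})^*$, which is the second assertion.

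For the weak*-continuity on all of $\text{GL}_+(2,\C)$, I would fix $f\in C(\overline{\R})$ and prove that $M\mapsto\mu_M(f)$ is continuous, the conjunction over all such $f$ being exactly weak*-continuity; since $\text{GL}_+(2,\C)$ carries the metric topology inherited from $2\times2$ complex matrices, it suffices to verify sequential continuity. At a matrix $M$ with $M.\C_+$ bounded this is immediate, since functional-norm continuity trivially implies weak*-continuity. At a matrix $M$ with $M.\C_+$ unbounded it is precisely the estimate already completed above: for $M_n\to M$ one isolates a $\delta$-ball about the limiting pole location $s$ (the point with $\phi(s)=\infty$), controls the complement $\{|x-s|\ge\delta\}$ through the uniform convergence $\phi_n\to\phi$ there together with the dominating weight $1/\pi(1+x^2)$, and controls the ball through the oscillation of $f$ near $s$ and the continuity of the total masses $\mu_n(\R)=\text{Im}\,\phi_n(i)$.

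Two points remain. First, the estimate was written for $c\ne0$ with $s=-d/c\in\R$; I would record the parallel argument for $c=0$, where the pole sits at $s=\infty$ and the $\delta$-localization is performed on a neighborhood of $\infty$ in $\overline{\R}$. Second, and this is the genuine obstacle, one must govern the transition between the two strata: a sequence $M_n\to M$ may consist entirely of matrices with bounded image, so each $\mu_n$ is absolutely continuous and atomless, while the limit $\mu$ carries a genuine atom at $s$ by Corollary~\ref{mass}. The delicate matter is that this atomic mass must appear as the weak* limit of the mass that the densities $\text{Im}\,\phi_n(x)/\pi(1+x^2)$ accumulate near $x=s$; the $\delta$-localization is exactly the device that turns the continuity of the total masses into correct placement of the limiting atom, and the argument would collapse were those total masses not continuous. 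This also clarifies why functional-norm continuity cannot survive across the unbounded stratum, since an atom of mass $m>0$ in the limit stays at total-variation distance at least $m$ from every atomless approximant.
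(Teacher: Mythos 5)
Your proposal is correct and follows essentially the same route as the paper: $L^1$-continuity of the densities $\text{Im}\,\phi_N(x)/\pi(1+x^2)$ on the open stratum where $M.\C_+$ is bounded, and, at matrices with unbounded image, the $\delta$-localization around the pole $s$ combined with uniform convergence of $\phi_n$ off the ball and continuity of the total masses $\mu_n(\overline{\R})=\text{Im}\,\phi_n(i)$ to place the limiting atom correctly. Your closing remarks on the $c=0$ case and on why norm continuity must fail across the unbounded stratum match the paper's own treatment and its subsequent Remark.
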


 \begin{Remark}
 The norm continuity breaks at $M$ for which $M.\C_+$ is unbounded. 
 \end{Remark}
 
 
 
 


 

 For $M \in \text{GL}_+(2,\C)$ and $s \in \R$, let
 \[
   {}_sM = \frac{1}{\sqrt{1+s^2}}\begin{pmatrix} s & 1\\ -1 & s\end{pmatrix} M
 \]
 be a left translate of $M$ in $\text{GL}_+(2,\C)$ with the limiting matrices defined by ${}_{\pm\infty}M = \pm M$
 and $\mu_{{}_sM}$ be the representing measure of $\phi_{{}_sM}$. When $M$ is obvious, $\mu_{{}_sM}$ is simply denoted by
 $\mu_s$. Notice that both $\phi_{{}_sM}$ and $\mu_{{}_sM}$ are well-defined as functions of $s \in \overline{\R}$
 (especially at $s = \infty$) because all these depend on $M$ through the projective ray $\C^\times M$. 

 By parametric continuity, this family of measures gives rise to an operator $\Lambda_M$ 
 on the Banach space $C(\overline{\R})$ by
 \[
   \Lambda_M(f): \overline{\R} \ni s \mapsto \mu_{{}_sM}(f) \in \C, 
 \]
 which preserves positivity in the sense that $\Lambda_M(f) \geq 0$ for $f \geq 0$.
 Note that $\Lambda_M$ preserves the unit function $1$ in $C(\overline{\R})$ (a so-called Markov operator) if and only if
 $\text{Im}\, \phi_s(M.i) = \mu_{{}_sM}(1) = 1$ ($s \in \overline{\R}$), i.e., $M.i = i$. 

 In the integral representation
 \[
   \varphi(z) = \text{Re}\,\varphi(i) + \int_{\overline{\R}} \phi_s(z)\, \lambda(ds)
 \]
 of an endofunction $\varphi$, $z$ is replaced by $M.z$ to have
 \begin{align*}
   \varphi(M.z) &= \text{Re}\,\varphi(i) + \int_{\overline{\R}} \phi_{{}_sM}(z)\, \lambda(ds)\\
                &= \text{Re}\,\varphi(i) + \int_{\overline{\R}} 
                  \left(\text{Re}\,\phi_s(M.i) + \int_{\overline{\R}} \phi_t(z)\, \mu_{{}_sM}(dt)\right) \lambda(ds)\\
                &= \text{Re}\,\varphi(i) + \int_{\overline{\R}} \text{Re}\,\phi_s(M.i)\,\lambda(ds)
                  + \int_{\overline{\R}} \lambda(ds)\,\int_{\overline{\R}} \phi_t(z)\, \mu_{{}_sM}(dt). 
 \end{align*}
 To interpret the last term, consider the repeated integral 
 \[
   \int_{\overline{\R}} \lambda(ds)\,\int_{\overline{\R}} f(t)\, \mu_{{}_sM}(dt)
   = \int_{\overline{\R}} \lambda(ds)\,\mu_{{}_sM}(f)
 \]
 of $f \in C(\overline{\R})$. Since $\mu_{{}_sM}(f)$ is continuous in $s \in \overline{\R}$ by Theorem~\ref{continuity}
 and $\mu_{{}_sM}(f) \geq 0$ for $f \geq 0$,
 this gives a positive linear functional of $f \in C(\overline{\R})$ and therefore defines a Radon measure $\lambda^M$ on $\overline{\R}$ by
 \[
   \int_{\overline{\R}} f(t)\, \lambda^M(dt) =  \int_{\overline{\R}} \lambda(ds)\,\int_{\overline{\R}} f(t)\, \mu_{{}_sM}(dt). 
 \]

 Discussions so far are now organized as follows:
 
 \begin{Theorem}
   Given an endomatrix $M$ and an endofunction $\varphi$ with $\lambda$ the representing measure of $\varphi$,
   the M\"obius-transformed endofunction is represented by 
 \[
   \varphi(M.z) = \text{Re}\,\varphi(i) + \int_{\overline{\R}} \text{Re}\,\phi_s(M.i)\,\lambda(ds) + \int_{\overline{\R}} \phi_t(z)\, \lambda^M(dt).
 \]
 In other words,
 \[
   \text{Re}\, \varphi(M.i) = \text{Re}\,\varphi(i) + \int_{\overline{\R}}  \text{Re}\,\phi_s(M.i)\,\lambda(ds)
 \]
 and the representing measure of $\varphi(M.z)$ is given by $\lambda^M$.
\end{Theorem}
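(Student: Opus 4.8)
The plan is to read the assertion off the chain of substitutions carried out immediately above the statement, the genuine work being only the justification of the manipulations it records. I would start from the integral representation $\varphi(z) = \text{Re}\,\varphi(i) + \int_{\overline{\R}} \phi_s(z)\,\lambda(ds)$ and substitute $z \mapsto M.z$. The crucial algebraic identity is that composition of M\"obius transforms corresponds to matrix multiplication, so that $\phi_s(M.z) = \phi_{{}_sM}(z)$, where ${}_sM$ is the left translate introduced before the statement; this converts the substituted integral into $\int_{\overline{\R}} \phi_{{}_sM}(z)\,\lambda(ds)$.

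Next I would expand each inner endofunction $\phi_{{}_sM}$ by its own integral representation $\phi_{{}_sM}(z) = \text{Re}\,\phi_s(M.i) + \int_{\overline{\R}} \phi_t(z)\,\mu_{{}_sM}(dt)$, using $\phi_{{}_sM}(i) = \phi_s(M.i)$ to identify the real constant, and split the resulting double integral into the constant part $\int_{\overline{\R}} \text{Re}\,\phi_s(M.i)\,\lambda(ds)$ and the repeated integral $\int_{\overline{\R}} \lambda(ds)\int_{\overline{\R}} \phi_t(z)\,\mu_{{}_sM}(dt)$. For fixed $z \in \C_+$ the function $t \mapsto \phi_t(z) = (1+tz)/(t-z)$ extends continuously to $t=\infty$ with value $z$, hence lies in $C(\overline{\R})$; applying the defining relation of $\lambda^M$ to its real and imaginary parts identifies the repeated integral as $\int_{\overline{\R}} \phi_t(z)\,\lambda^M(dt)$, which gives the displayed formula.

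The reformulation then follows by evaluating at $z=i$. Since each atomic endofunction fixes $i$, one has $\phi_t(i) = i$ for every $t \in \overline{\R}$, so the last term becomes $i\,\lambda^M(\overline{\R})$, which is purely imaginary; taking real parts of $\varphi(M.i)$ therefore eliminates it and leaves $\text{Re}\,\varphi(M.i) = \text{Re}\,\varphi(i) + \int_{\overline{\R}} \text{Re}\,\phi_s(M.i)\,\lambda(ds)$. This exhibits the constant term of the representation of $\varphi\circ\phi_M$ as the real number $\text{Re}\,\varphi(M.i)$ with $\lambda^M$ a positive Radon measure, whence by uniqueness of the representing measure (Stieltjes inversion) $\lambda^M$ is precisely the representing measure of $\varphi(M.z)$.

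I expect the main obstacle to lie not in the algebra but in the two analytic points that legitimize the rearrangement: that the repeated integral defines a genuine Radon measure $\lambda^M$, and that the interchange producing it is valid. Both rest on the weak*-continuity of $s \mapsto \mu_{{}_sM}$ established in Theorem~\ref{continuity}, together with the positivity $\mu_{{}_sM}(f) \geq 0$ for $f \geq 0$; since these were already secured before the statement, the proof reduces to invoking them and reading off the formula. A secondary point worth stating explicitly is that $t \mapsto \phi_t(z)$ indeed belongs to $C(\overline{\R})$ for each fixed $z$, so that the defining property of $\lambda^M$ applies to it verbatim.
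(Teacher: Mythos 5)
Your proposal is correct and follows the paper's own derivation essentially verbatim: the paper proves the theorem by exactly this chain of substitutions (using $\phi_s(M.z)=\phi_{{}_sM}(z)$, expanding each $\phi_{{}_sM}$ by its own integral representation with constant term $\text{Re}\,\phi_s(M.i)$, and defining $\lambda^M$ through the repeated integral, justified by the continuity of $s\mapsto\mu_{{}_sM}(f)$ from Theorem~\ref{continuity} together with positivity). The only material you add beyond the paper is the explicit evaluation at $z=i$ and the appeal to uniqueness of the representing measure to identify $\lambda^M$, both of which the paper leaves implicit.
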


\begin{Corollary}
For endomatrices $M$ and $N$, we have $\Lambda_{MN} = \Lambda_M \Lambda_N$. 
\end{Corollary}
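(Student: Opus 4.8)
The plan is to reduce the operator identity $\Lambda_{MN} = \Lambda_M\Lambda_N$ to the representing-measure transformation recorded in the preceding theorem, together with the associativity of M\"obius composition. The essential algebraic fact I would exploit is that left translation is compatible with products: since
\[
{}_s(MN) = \frac{1}{\sqrt{1+s^2}}\begin{pmatrix} s & 1\\ -1 & s\end{pmatrix} MN = ({}_sM)N,
\]
the associated endofunctions satisfy $\phi_{{}_s(MN)}(z) = \bigl(({}_sM)N\bigr).z = ({}_sM).(N.z) = \phi_{{}_sM}(N.z)$ for every $s \in \overline{\R}$, by associativity of the M\"obius action.

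First I would unwind the two definitions at stake. By construction $(\Lambda_M f)(s) = \mu_{{}_sM}(f)$ for $f \in C(\overline{\R})$ and $s \in \overline{\R}$, while the defining relation $\int_{\overline{\R}} f(t)\,\lambda^M(dt) = \int_{\overline{\R}} \lambda(ds)\int_{\overline{\R}} f(t)\,\mu_{{}_sM}(dt)$ of the preceding theorem reads exactly $\lambda^M(f) = \lambda(\Lambda_M f)$; that is, $\lambda \mapsto \lambda^M$ is the adjoint of $\Lambda_M$ on measures.

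The key step is then to apply the preceding theorem not to a generic $\varphi$ but to the endofunction $\varphi = \phi_{{}_sM}$, whose representing measure is $\mu_{{}_sM}$, composed with the endomatrix $N$. This is legitimate because ${}_sM = \begin{pmatrix} s & 1\\ -1 & s\end{pmatrix}M$ lies in $\text{GL}_+(2,\C)$, being the product of the real endomatrix $\begin{pmatrix} s & 1\\ -1 & s\end{pmatrix}$ and $M$, so $\phi_{{}_sM}$ is a genuine endofunction. The theorem identifies the representing measure of $z \mapsto \phi_{{}_sM}(N.z)$ as $(\mu_{{}_sM})^N$, characterized by $(\mu_{{}_sM})^N(f) = \mu_{{}_sM}(\Lambda_N f)$. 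On the other hand the composition identity above gives $\phi_{{}_sM}(N.z) = \phi_{{}_s(MN)}(z)$, whose representing measure is $\mu_{{}_s(MN)}$ by definition. Comparing the two descriptions yields, for all $f \in C(\overline{\R})$ and $s \in \overline{\R}$,
\[
(\Lambda_{MN}f)(s) = \mu_{{}_s(MN)}(f) = \mu_{{}_sM}(\Lambda_N f) = \bigl(\Lambda_M(\Lambda_N f)\bigr)(s),
\]
the last equality being once more the definition of $\Lambda_M$ applied to the function $\Lambda_N f$. Since $s$ and $f$ are arbitrary, this is precisely $\Lambda_{MN} = \Lambda_M\Lambda_N$.

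I expect no genuine obstacle here, only two bookkeeping points needing care. The first is to confirm that $\Lambda_N f$ indeed lies in $C(\overline{\R})$, so that it is a legitimate argument both for $\mu_{{}_sM}$ and for $\Lambda_M$; this rests on the weak*-continuity of $s \mapsto \mu_{{}_sM}(g)$ supplied by Theorem~\ref{continuity}, which is exactly what made $\Lambda_N$ an operator on $C(\overline{\R})$ in the first place. The second is to track the order of composition carefully, so that the right factor $N$ acts first on $z$ in $\phi_{{}_sM}(N.z)$ and matches $\Lambda_N$ being the inner operator in $\Lambda_M(\Lambda_N f)$; equivalently, one checks the dual statement $\lambda^{MN} = (\lambda^M)^N$ and reads off the order via adjoints.
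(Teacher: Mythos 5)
Your proof is correct and follows essentially the same route as the paper: both apply the preceding theorem to the endofunction $\varphi = \phi_{{}_sM}$ composed with $N$, use ${}_s(MN) = ({}_sM)N$ to identify the resulting representing measure with $\mu_{{}_s(MN)}$, and then unwind the definitions of $\Lambda_M$ and $\Lambda_N$. Your additional remarks on the continuity of $\Lambda_N f$ and the order of composition are sound bookkeeping that the paper leaves implicit.
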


\begin{proof}
  For the choice $\varphi = \phi_{{}_sM}$ and $\varphi(N.z)$, the theorem gives
  \[
    \mu_{{}_sMN} = \int \mu_{{}_sM}(dt) \mu_{{}_tN}, 
  \]
  which is utilized to see
  \begin{align*}
    (\Lambda_{MN}f)(s) = \mu_{{}_sMN}(f) &= \int \mu_{{}_sM}(dt) \mu_{{}_tN}(f)\\
                                         &= \int (\Lambda_Nf)(t)\, \mu_{{}_sM}(dt) = (\Lambda_M\Lambda_Nf)(s).
  \end{align*}
\end{proof}

\begin{Remark}
The measure support of $\lambda^M$ is $M^{-1}.[\lambda]$ if $M$ is an automatrix and $\overline{\R}$ otherwise. 
\end{Remark}

We shall give the parametrized measure $\mu_{{}_sM}$ more explicitly in three cases.

 The case that $\phi_M$ is an autofunction, i.e., $M \in \text{GL}_+(2,\R)$, is already dealt with in Theorem~\ref{auto};
 $\mu_{{}_sM}$ is an atomic measure at $M^{-1}.s \in \overline{\R}$ with the mass given by
 \[
   \frac{1+s^2}{1+(M^{-1}.s)^2} \frac{ad-bc}{(a-cs)^2}
   \quad
   \text{for}\ 
   M =
   \begin{pmatrix}
     a & b\\
     c & d
   \end{pmatrix}
   \in \text{GL}_+(2,\R).
 \]

 When $M.\overline{\R}$ is not contacting $\overline{\R}$, i.e., $0 < \kappa(M) < 1$,
 $\phi_{{}_sM}(z)$ is continuously extended to $\overline{\R}\sqcup \C_+$
 with the measure $\mu_{{}_sM}$ supported by $\R$ in such a way
 that it is equivalent to the Lebesgue measure on $\R$ with the density function given by a rational function
 $\text{Im}\,\phi_{{}_sM}(x)/\pi(1+x^2)$ of $x \in \R$, i.e., 
 \[
   \mu_{{}_sM}(dx) = \frac{\text{Im}\,\phi_{{}_sM}(x)}{\pi(1+x^2)} dx. 
 \]
 
 As a mixture of these, there is the contact case, i.e., $M \in \text{GL}_+(2,\C) \setminus \text{GL}_+(2,\R)$ fulfilling $\kappa(M) = 1$.
 If ${}_sM.\overline{\R}$ is a (bounded) circle, the description of non-contact case remains valid. 
 Otherwise, 
 the boundary circline ${}_sM.\overline{\R}$ is unbounded exactly at one point $s$ in $\overline{\R}$ and 
 there exists $t \in \overline{\R}$ satisfying ${}_sM.t = \infty \iff s = M.t$, 
 which enables us to apply Lemma~\ref{unbounded} for ${}_sM$ to obtain
 an expression $\phi_{{}_sM} = p \phi_t + q$ with $p > 0$ and $\text{Im}\,q \geq 0$.
 Consequently we have 
\[
  {}_sM \doteq
  \begin{pmatrix}
    p & q\\
    0 & 1
  \end{pmatrix}
  \begin{pmatrix}
    t & 1\\
    -1 & t
  \end{pmatrix}
\iff 
  \begin{pmatrix}
    p & q\\
    0 & 1
  \end{pmatrix}
  \doteq
  \begin{pmatrix}
    s & 1\\
    -1 & s
  \end{pmatrix}
  M
   \begin{pmatrix}
    t & -1\\
    1 & t
  \end{pmatrix}. 
\]
From the last relation, one sees that
\[
  p = \frac{(ad-bc)(1+s^2)}{(-a+cs)^2 + (-b+ds)^2},
  \quad
  q = \frac{(ac+bd)(s^2-1) + (c^2+d^2-a^2-b^2)s}{(-a+cs)^2 + (-b+ds)^2}
\]
and
\[
  \mu_{{}_sM}(dx) = p \delta(x-t)\, dx + \frac{\text{Im}\,q}{\pi(1+x^2)}\, dx.
\]


\begin{Example}
  Given $\sigma,\tau \in \R$ and $p>0,r>0$,
  \[
    M(\sigma) = 
    \begin{pmatrix}
      \sigma & -1\\
      1 & \sigma
    \end{pmatrix}
    \begin{pmatrix}
      p & ir\\
      0 & 1
    \end{pmatrix}
    \begin{pmatrix}
      \tau & 1\\
      -1 & \tau
    \end{pmatrix}
  \]
  is an endomatrix which conveys $\C_+$ onto a disk of radius $(1+\sigma^2)/2r$ centered at $\sigma + i(1+\sigma^2)/2r$
  in such a way that
  \[
    {}_sM(\sigma) = \frac{s-\sigma}{\sqrt{1+s^2}} M\left(\frac{1+s\sigma}{s-\sigma}\right)
    \quad
    (s \not= \sigma)
  \]
  and
  \[
    {}_\sigma M(\sigma) = \sqrt{1+\sigma^2}  \begin{pmatrix}
      p & ir\\
      0 & 1
    \end{pmatrix}
    \begin{pmatrix}
      \tau & 1\\
      -1 & \tau
    \end{pmatrix}. 
  \]
  Thus ${}_sM(\sigma).\C_+$ is a bounded disk contacting $\R$ at $s$ for $s \not= \sigma$ and
  $\C_+ + ir$ for $s = \sigma$. 
\end{Example}




 \section{Localized Positivity}
 Recall that an endofunction $\varphi$ on $\C_+$ is characterized as a holomorphic function
 different from a real constant and satisfying boundary positivity. 
We shall here modify the condition to localized one near the boundary support $[\varphi]$.
When $[\varphi] = \overline{\R}$, this is just the global positivity in view of the maximum principle for the harmonic
function $\text{Im}\,\varphi$. So assume that $\overline{\R} \setminus [\varphi] \not= \emptyset$ and
$\text{Im}\,\varphi \geq 0$ on the intersection of $\C_+$ with a neighborhood of $[\varphi]$ in $\overline{\C}$.
Then, for each $r>0$, $\varphi_r(z) = \varphi(z) + ir$ satisfies $\text{Im}\, \varphi_r \geq 0$ on $\C_+ \cap U$ with
$U$ a sufficiently small neighborhood of $\overline{\R}$ in $\overline{C}$, whence $\varphi_r$ is an endofunction.
Consequently $\text{Im}\,\varphi_r(z) \geq 0$ ($z \in \C_+$) and the boundary positivity holds for $\varphi$ as a limit $r \downarrow 0$.
Since $\varphi$ is not a real constant by assumption, $\varphi$ is an endofunction. 

\begin{Theorem}
  A holomorphic function $\varphi$ on $\C_+$ different from real constants is an endofunction if and only if 
  there exists a neighborhood $U$ of $[\varphi]$ in $\overline{\C}$
  satisfying $\text{Im}\, \varphi(z) \geq 0$ ($z \in U \cap \C_+$). 
\end{Theorem}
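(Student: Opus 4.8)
The plan is to prove both implications, with the forward direction immediate and the reverse direction carrying the content. If $\varphi$ is an endofunction then $\text{Im}\,\varphi(z) > 0$ throughout $\C_+$, so the asserted positivity holds with $U = \overline{\C}$; this settles the ``only if'' part. For the converse I would first dispose of the degenerate case $[\varphi] = \overline{\R}$: here a neighborhood $U$ of $[\varphi]$ is already a neighborhood of the entire boundary $\overline{\R}$, so $\text{Im}\,\varphi \geq 0$ holds on $U \cap \C_+$, and the boundary positivity principle recalled earlier (the maximum principle for the harmonic function $\text{Im}\,\varphi$) upgrades this to $\text{Im}\,\varphi \geq 0$ on all of $\C_+$; being non-constant, $\varphi$ is then an endofunction.

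The substantive case is $\overline{\R} \setminus [\varphi] \neq \emptyset$. The difficulty is that on the complement of $[\varphi]$ we only know that $\varphi$ extends holomorphically and takes real boundary values, so a priori $\text{Im}\,\varphi$ could dip below $0$ just inside $\C_+$ near that arc, and the hypothesis near $[\varphi]$ alone does not visibly force global positivity. The device I would use to circumvent this is the vertical perturbation $\varphi_r(z) = \varphi(z) + ir$ with $r > 0$. Since $\varphi$ is real-analytic on each open interval disjoint from $[\varphi]$ and real-valued there, $\text{Im}\,\varphi$ vanishes on $\overline{\R} \setminus [\varphi]$; hence on a sufficiently small neighborhood of that arc one has $\text{Im}\,\varphi > -r$, making $\text{Im}\,\varphi_r > 0$ there.

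Concretely, I would fix $r > 0$ and set $C = \overline{\R} \setminus U$, a compact subset of $\overline{\R}\setminus[\varphi]$ on which $\varphi$ admits a holomorphic extension with real boundary values. By continuity and compactness there is an open neighborhood $W$ of $C$ in $\overline{\C}$ with $|\text{Im}\,\varphi| < r$ throughout $W$. Then $U \cup W$ is an open neighborhood of all of $\overline{\R}$, since every real point lies either in $U$ or in $C \subset W$, and on $(U \cup W) \cap \C_+$ one has $\text{Im}\,\varphi_r \geq 0$: on $U \cap \C_+$ because $\text{Im}\,\varphi \geq 0$ by hypothesis, and on $W \cap \C_+$ because $\text{Im}\,\varphi > -r$. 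Boundary positivity now promotes this to $\text{Im}\,\varphi_r \geq 0$ on all of $\C_+$, so $\varphi_r \in \End(\C_+)$, i.e.\ $\text{Im}\,\varphi(z) > -r$ for every $z \in \C_+$.

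Finally I would let $r \downarrow 0$ to obtain $\text{Im}\,\varphi \geq 0$ on $\C_+$; as $\varphi$ is not a real constant, this exhibits $\varphi$ as an endofunction. The one point requiring care — and the step I expect to be the main obstacle — is the patching in the previous paragraph: one must verify that $C = \overline{\R}\setminus U$ is genuinely compact (immediate since $\overline{\R}$ is compact in $\overline{\C}$ and $U$ is open) and lies inside the region where $\varphi$ has a real-analytic boundary extension, so that the uniform bound $|\text{Im}\,\varphi| < r$ can be secured on a full neighborhood $W$ and that $U \cup W$ really exhausts $\overline{\R}$. Once this uniform control is in place, the limit $r \downarrow 0$ is routine.
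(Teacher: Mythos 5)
Your proposal is correct and follows the paper's own argument: the same case split on whether $[\varphi]=\overline{\R}$, the same perturbation $\varphi_r=\varphi+ir$, the same appeal to boundary positivity via the maximum principle, and the same limit $r\downarrow 0$. The only difference is that you spell out explicitly the compactness/patching step that the paper compresses into the phrase ``with $U$ a sufficiently small neighborhood of $\overline{\R}$,'' which is a welcome clarification rather than a deviation.
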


\begin{Remark}
This fact is realized for a certain rational combination of power functions in \cite[Theorem 1.1]{NW}.
\end{Remark}



\begin{Example} Real rational endofunctions are described in \cite{Do, Na}.
  Let us reexamine it in our context.
  Given a rational function $f$,
  we seek for the condition
  which makes the restriction $\varphi = f|_{\C_+}$ be an endofunction.
  Since $\varphi$ is holomorphic and non-vanishing on $\C_+$, poles and zeros of $f$ are out of $\C_+$.
  
  When the boundary support $[\varphi]$ is not full, $f(z) \in \R$ for $z \in \R \setminus [\varphi]$, which 
  implies that $f$ is a real rational function. Therefore all poles and zeros are concentrated in
  the boundary $\overline{\R}$ with $[\varphi]$ consisting of poles.
  Then, in view of the boundary behavior of endofunctions (Proposition~\ref{Stoltz}),
  poles of $f$ must be simple.
  The partial fraction expansion of $f$ therefore takes the form
  \[
    f(z) = az + b + \sum_j \frac{c_j}{s_j-z}
  \]
  with $a,b,c_j$ and $s_j \in \R$.
  By Corollary~\ref{mass}, $a \geq 0$ and $c_j > 0$ to have a positive boundary measure.
  Consequently, under this condition,
  the choice $\lambda_0(ds) = a \delta_\infty + \sum_j c_j \delta_{s_j}$
  gives the expression
  \[
    \varphi(z) = b + \int_{\R} \frac{1+s_jz}{s_j - z}\, \lambda_0(ds),
  \]
  showing that $\varphi$ is an endofunction.

  For non-real rational function $f$, the boundary support is $\overline{\R}$
  and real poles (including $\infty$) need to behave in the same way
  to obtain an endofunction. Other poles should be in $\C_-$ and
  $\psi = \varphi - az - (b+ic) - \sum_j c_j/(s_j-z)$ with $c \in \R$
  is continuously extended to $\overline{\R}$
  so that it vanishes at $\infty$. Thus $\varphi$ is an endofunction if and only if
  \[
    c \geq -\min\{ \text{Im}\, \psi(x); x \in \R\}.
  \]
  In fact, if $c < -\min$, there is an interval $I$ such that $c + \text{Im}\,\psi(x) < 0$ ($x \in I$),
  whence we can find $x \in I \setminus [\lambda_0]$ so that $\text{Im}\,\varphi(x+iy) < 0$
  for sufficiently small $y > 0$.
  If $c > -\min$, the rational function $\psi +ic$ satisfies boundary positivity and
  it is an endofunction. The same holds for $c = -\min$ as a pointwise limit of endofunctions. 
  
  Moreover, if this is the case, the representing measure of $\varphi$ is given by 
  \[
    \lambda_0(ds) + \frac{c + \text{Im}\,\psi(s)}{\pi(1+s^2)}\, ds. 
  \]
\end{Example}

\appendix
\section{}
To extract more information on point masses of $\lambda$,
we introduce the Stoltz sector of apex $0$ and aperture $0 < \phi < \pi/2$ by 
\[
  V_\phi = \{ x +iy \in \C; |x| < y \tan\phi\}, 
\]
which is an open convex cone in $\C_+$ and has a spindle shape in $\overline{\C}$.
Given $\alpha \in \R$ and a signed measure $\lambda$ on $\overline{\R}$, let $\varphi$ be a holomorphic function on $\C \setminus \R$
defined by
\[
  \varphi(z) = \alpha + \int_{\overline{\R}} \frac{1+sz}{s-z}\, \lambda(ds)
  = \alpha + \lambda(\{\infty\})z + \int_\R \frac{1+sz}{s-z}\, \lambda(ds). 
\]
Notice that $\varphi(i) = \alpha + i\lambda(\overline{\R})$.

\begin{Proposition}\label{Stoltz}
  Given $b>0$ and $c \in \R$, both
  \[
    \{ \varphi(x+iy)/y; x+iy \in V_\phi, y \geq b \}
  \]
  and
  \[
    \{ y \varphi(x+iy); x + iy \in c + V_\phi, 0 < y \leq b\}
  \]
  are bounded sets and we have 
    \[
     \lim_{V_\phi \ni x+iy \to \infty} \frac{\varphi(x+iy) - \lambda(\{\infty\})(x+iy)}{y} = 0, 
   \]
      \[
       \lim_{c+V_\phi \ni x+iy \to c} y \left(\varphi(x+iy) + \lambda(\{c\}) \frac{1+c^2}{x+iy-c}\right) = 0.
   \]
\end{Proposition}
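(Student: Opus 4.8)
The plan is to prove both limits, together with the two boundedness assertions, by dominated convergence applied to the integral over $\R$, after peeling off the contribution of the relevant boundary atom and the prefactors that manifestly vanish. Everything rests on two elementary identities for the kernel $\phi_s(z)=(1+sz)/(s-z)$: the partial-fraction form
\[
  \phi_s(z) = -s + \frac{1+s^2}{s-z},
\]
which shows $\phi_s(z)\to -s$ as $z\to\infty$ and isolates the pole at $s$, and, with $z=x+iy$,
\[
  \text{Im}\,\phi_s(z) = \frac{y(1+s^2)}{(s-x)^2+y^2},
  \qquad
  |\phi_s(z)|^2 = \frac{(1+sx)^2+s^2y^2}{(s-x)^2+y^2},
\]
which supply the modulus bounds. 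Recall also $\phi_\infty(z)=z$, so the atom at $\infty$ in $\int_{\overline{\R}}\phi_s(z)\,\lambda(ds)$ contributes precisely $\lambda(\{\infty\})z$.

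First I would establish the uniform kernel bounds, which simultaneously give the boundedness assertions and the dominating functions. Inside $V_\phi$ one has $|x|\le y\tan\phi$, hence $y\le|z|\le y\sec\phi$; I claim a constant $M=M(\phi,b)$ with $|\phi_s(z)|\le My$ for all $s\in\R$ and all $z\in V_\phi$ with $y\ge b$. Using $|x|\le y\tan\phi$ the numerator is bounded by $2+(1+2\tan^2\phi)\,s^2y^2$, so after dividing by $y^2\bigl((s-x)^2+y^2\bigr)$ the constant part is $O(1/b^4)$ and the main part reduces to controlling the quotient $s^2/\bigl((s-x)^2+y^2\bigr)$; this I bound by splitting into $|s|\le 2|x|$, where $s^2$ is dominated by $y^2$ through the sector bound, and $|s|\ge 2|x|$, where $(s-x)^2\ge s^2/4$. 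Since $|\lambda|$ is finite, $\tfrac1y\int_{\R}|\phi_s(z)|\,|\lambda|(ds)\le M|\lambda|(\R)$, and together with $|\alpha|/y\le|\alpha|/b$ and $|\lambda(\{\infty\})|\,|z|/y\le|\lambda(\{\infty\})|\sec\phi$ this bounds $\varphi(z)/y$. The same scheme, now with $0<y\le b$ and $|x-c|\le y\tan\phi$ (so $|x|\le|c|+b\tan\phi$), yields $M'=M'(\phi,b,c)$ with $y|\phi_s(z)|\le M'$ for all $s\in\R$ and $z\in c+V_\phi$, where the core quotient to bound is now $y^2s^2/\bigl((s-x)^2+y^2\bigr)$; since the atom at $c$ contributes $y\,\lambda(\{c\})(1+c^2)/(c-z)$ with $y/|c-z|\le 1$, this bounds $y\varphi(z)$.

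For the first limit I would write $\varphi(z)-\lambda(\{\infty\})(x+iy)=\alpha+\int_{\R}\phi_s(z)\,\lambda(ds)$ and divide by $y$: the term $\alpha/y\to 0$, while the integrand $\phi_s(z)/y\to 0$ pointwise in $s$ as $z\to\infty$ in $V_\phi$ (since $\phi_s(z)\to -s$ and $y\to\infty$) and is dominated by the constant $M$, so dominated convergence along sequences tending to $\infty$ in the sector gives $0$. For the second limit I would isolate the atom at $c$ through the cancellation
\[
  \lambda(\{c\})\Bigl(\phi_c(z) + \frac{1+c^2}{z-c}\Bigr) = -c\,\lambda(\{c\}),
\]
so that $\varphi(z)+\lambda(\{c\})\frac{1+c^2}{z-c}=\alpha-c\lambda(\{c\})+\lambda(\{\infty\})z+\int_{\R\setminus\{c\}}\phi_s(z)\,\lambda(ds)$. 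Multiplying by $y$, the first three terms tend to $0$ as $z\to c$ (because $y\to 0$ and $z$ stays bounded), while in the remaining integral the integrand $y\phi_s(z)\to 0$ pointwise for each fixed $s\neq c$ (as $\phi_s(z)\to\phi_s(c)$ is finite and $y\to0$) and is dominated by $M'$, so dominated convergence again yields $0$.

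The main obstacle is the uniform kernel bound of the second step: once it is in hand, the boundedness assertions are immediate and the two limits are routine applications of dominated convergence. The delicate point is that the bound must hold simultaneously for \emph{all} $s\in\R$ and uniformly over the whole unbounded (resp. punctured) sector; this is exactly what the sector constraint $|x|\le y\tan\phi$ combined with the two-range split in $s$ is designed to deliver, and without the sector restriction the quotients $|\phi_s(z)|/y$ and $y|\phi_s(z)|$ need not remain bounded.
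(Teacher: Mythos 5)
Your argument is correct: the uniform kernel bounds do hold (the two-range split in $s$ works exactly as you describe, and for the second regime the extra factor $y^2\le b^2$ tames the $s^2y^4$ term), the cancellation $\phi_c(z)+\tfrac{1+c^2}{z-c}=-c$ is right, and dominated convergence then delivers both limits. The route is genuinely different from the paper's in two respects. First, for the sector bound at $\infty$ the paper does not bound $|\phi_s(z)|/y$ in one stroke; it splits $\varphi$ into real and imaginary parts, rewrites the real-part integrand in polar coordinates $s-x=\rho\cos\theta$, $y=\rho\sin\theta$ to exhibit the bound $1+\tfrac{1}{b^2}+\tan\phi+\tan^2\phi$, and bounds the imaginary-part integrand by computing $\sup_{s}\tfrac{1+s^2}{(s-x)^2+y^2}$ explicitly from a quadratic inequality in $r$; your single modulus estimate is more compact and avoids the case analysis between real and imaginary parts, at the cost of the slightly fiddly numerator manipulation. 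Second, and more substantially, the paper does not reprove the boundary limit at $c$ directly: it transports the problem to $\infty$ by the substitution $w=-1/(z-c)$, computes the representing measure $\mu$ of $\psi(w)=\varphi(c-1/w)$ via the change-of-variable formula (so that $\mu(\{\infty\})=(1+c^2)\lambda(\{c\})$), and observes that $vy=y^2/((x-c)^2+y^2)$ stays between $\cos^2\phi$ and $1$ on the sector, so the first limit for $\psi$ yields the second for $\varphi$. Your direct proof, isolating the atom at $c$ by the explicit cancellation and redoing dominated convergence with the bound $y|\phi_s(z)|\le M'$, is more self-contained (it needs no measure-transformation formula), whereas the paper's reduction buys economy by exploiting the M\"obius covariance it has already set up. Both are valid; nothing is missing from your version.
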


\begin{proof}
  In the expression
  \[
  \frac{\text{Re}\,\varphi(x+iy) - \lambda(\{\infty\})x}{y}
  = \frac{\text{Re}\,\varphi(i)}{y} + \int_\R \frac{(s-x)(1+sx) - sy^2}{y((s-x)^2 + y^2)}\, \lambda(ds), 
\]
we regard the integrand as a function of $(s,y)$ with $x$ a parameter and rewrite it by the polar coordinates
$s-x = \rho\cos\theta$, $y = \rho\sin\theta$ ($\rho>0$, $0 < \theta <\pi$) to have an expression
\[
  \frac{1+x^2-y^2}{\rho y} \cos\theta + \frac{x}{y} \cos(2\theta)
\]
with its modulus estimated by 
\[
  \frac{|1+x^2-y^2|}{\rho y} + \frac{|x|}{y} 
  \leq 1 + \frac{1+x^2}{y^2} + \frac{|x|}{y}
  \leq 1 + \frac{1}{b^2} + \tan\phi + \tan^2\phi
\]
for $x+iy \in V_\phi$ fulfilling $y \geq b$.

The bounded convergence theorem is now applied to see that, for a sequence $(x_n+iy_n)$ in $V_\phi$ satisfying
$y_n \to \infty$,
\begin{multline*}
 \lim_{n \to \infty} \int_\R \frac{(s-x_n)(1+sx_n) - sy_n^2}{y_n((s-x_n)^2 + y_n^2)}\, \lambda(ds)\\
 = \int_\R \lim_{n \to \infty}\frac{(s-x_n)(1+sx_n) - sy_n^2}{y_n((s-x_n)^2 + y_n^2)}\, \lambda(ds) = 0,
\end{multline*}
which implies that, when $y>0$ is large, $y^2 \leq x^2 + y^2 \leq y^2/\cos^2\phi$ is large as well for $x+iy \in V_\phi$
and we can make 
\[
  \int_\R \frac{(s-x)(1+sx) - sy^2}{y((s-x)^2 + y^2)}\, \lambda(ds)
\]
arbitrarily small.

As to the imaginary part, in the expression 
\[
  \frac{\text{Im}\, \varphi(x+iy) - \lambda(\{\infty\})y}{y} = \int_\R \frac{1+s^2}{(s-x)^2 + y^2}\, \lambda(ds), 
\]
its modulus is estimated by 
\[
  \left|  \int_\R \frac{1+s^2}{(s-x)^2 + y^2}\, \lambda(ds) \right|
  \leq \int_\R \frac{1+s^2}{(s-x)^2 + y^2}\, |\lambda|(ds). 
\]
Since the range of the last integrand
\[
  \displaystyle r = \frac{1+s^2}{(s-x)^2 + y^2} \iff r((s-x)^2 + y^2) = 1 + s^2
\]
is ruled by 
$r^2y^2 - r(x^2+y^2+1) + 1 \leq 0$,
we obtain an estimate
\begin{align*}
  \sup_{s \in \R} \frac{1+s^2}{(s-x)^2+y^2}
  &= \frac{x^2+y^2+1}{2y^2} + \sqrt{\left(\frac{x^2+y^2+1}{2y^2}\right)^2 - \frac{1}{y^2}}\\
  &\leq \frac{x^2+y^2+1}{y^2}
    = 1 + \frac{1+x^2}{y^2} \leq 1 + \frac{1}{b^2} + \tan^2\phi. 
\end{align*}
Again the bounded convergence theorem is applied to have
\[
   \lim_{V_\phi \ni x+iy \to \infty} \frac{\text{Im}\,\varphi(x+iy) - \lambda(\{\infty\})y}{y} = 0. 
\]
By combining these, we obtain the first convergence.



To show the second convergence, introduce a new variable $w \in \C_+$ by
$w = -1/(z-c) \iff z = c - 1/w$. Then $z \in c+V_\phi \iff w \in V_\phi$ and
the condition $|z-c| \leq b$ is equivalent to $|w| \geq 1/b$.
Let 
$\psi(w) = \varphi(-1/(z-c))$ and $\mu$ be the representing measure of $\psi$.
Under the change-of-variable $s = c-1/t$ on $\overline{\R}$, the transformation formula shows 
\[
  \mu(dt) = \frac{t^2 + (ct-1)^2}{1+t^2} \lambda(ds) 
\]
and hence $\mu(\{\infty\}) = (1+c^2) \lambda(\{c\})$. Thus the target of the first convergence for $\psi$ takes the form
\[
  \frac{\psi(u+iv) - \mu(\{\infty\}) (u+iv)}{v}
  = \frac{1}{vy}  y \left( \varphi(x+iy) + \lambda(\{c\}) \frac{1+c^2}{x+iy-c} \right).
\]
Since $vy = y^2/((x-c)^2+y^2)$ satisfies $\cos^2\phi \leq vy \leq 1$, the first convergence for $\psi$ implies the second convergence
for $\varphi$ and we are done. 
\end{proof}

\begin{Corollary}\label{mass} We have the following expressions for atomic masses 
  \[
\lim_{y \to +\infty} \frac{\varphi(iy)}{y}  =  i\lambda(\{\infty\})
\]
and
  \[
    \lim_{y \to +0} y\varphi(c+iy) = i(1+c^2)\lambda(\{c\}). 
  \]
\end{Corollary}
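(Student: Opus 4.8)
The plan is to obtain both formulas as immediate specializations of Proposition~\ref{Stoltz}, reading the Stoltz sector limits along purely vertical rays. The key observation is that the imaginary ray $z = iy$ has zero real part, so it lies in every Stoltz sector $V_\phi$ and satisfies $iy \to \infty$ as $y \to +\infty$; likewise the ray $z = c+iy$ lies in $c+V_\phi$ and satisfies $c+iy \to c$ as $y \downarrow 0$. Thus the unrestricted one-variable limits claimed in the corollary are legitimate restrictions of the (a priori stronger) sector limits already established in the proposition, so no fresh convergence argument is needed.

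For the first formula, I would set $x = 0$ in the first convergence of Proposition~\ref{Stoltz}, which gives
\[
  \lim_{y \to +\infty} \frac{\varphi(iy) - \lambda(\{\infty\})\,iy}{y} = 0.
\]
Since $\lambda(\{\infty\})\,iy/y = i\lambda(\{\infty\})$ is constant in $y$, rearranging yields $\lim_{y\to+\infty} \varphi(iy)/y = i\lambda(\{\infty\})$.

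For the second formula, I would set $x = c$ in the second convergence, so that $x+iy-c = iy$ and, after multiplying through, the displayed limit reads
\[
  \lim_{y \downarrow 0} \left( y\,\varphi(c+iy) + \lambda(\{c\})\,\frac{1+c^2}{i} \right) = 0.
\]
Using $1/i = -i$, the constant term equals $-i(1+c^2)\lambda(\{c\})$, whence $\lim_{y\downarrow 0} y\,\varphi(c+iy) = i(1+c^2)\lambda(\{c\})$.

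Since Proposition~\ref{Stoltz} already supplies both the convergence and the requisite uniform boundedness over the full sectors, there is no genuine obstacle in deriving the corollary; the only points demanding any care are checking that the two vertical rays indeed lie inside the relevant Stoltz sectors (immediate, as their real parts vanish) and carrying out the elementary simplification $1/i = -i$ in the second case.
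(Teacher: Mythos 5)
Your proposal is correct and is exactly the intended derivation: the paper treats the corollary as an immediate specialization of Proposition~\ref{Stoltz} to the vertical rays $x=0$ and $x=c$, and your computation (including the $1/i=-i$ simplification) matches that reading.
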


\end{document}